\documentclass[11pt]{amsart}

\usepackage[T1]{fontenc}
\usepackage{amsmath,amssymb,amsthm}
\usepackage{enumitem}
\usepackage[colorlinks=true,linkcolor=blue,citecolor=blue,urlcolor=blue]{hyperref}

\newtheorem{theorem}{Theorem}[section]
\newtheorem{corollary}[theorem]{Corollary}
\newtheorem{lemma}[theorem]{Lemma}

\theoremstyle{definition}
\newtheorem{definition}[theorem]{Definition}

\theoremstyle{remark}
\newtheorem{remark}[theorem]{Remark}

\newcommand{\Th}{\operatorname{Th}}

\usepackage[mathlines]{lineno}

\title[The Failure of Stable Composition in Simple Theories]{The Failure of Stable Composition for Equivalence Relations in Simple Theories}

\author{Mostafa Mirabi}
\address{\newline The Taft School, Watertown, CT 06795, USA and \newline  Wesleyan University, Middletown, CT 06459, USA}
\email{mmirabi@wesleyan.edu}
\urladdr{https://sites.google.com/site/mostafamirabi}

\subjclass[2010]{03C45, 03C55}
\keywords{stable formula, NFCP, simple theory, equivalence relation, random bipartite graph, stable forking}

\begin{document}

\begin{abstract}
Casanovas and Potier proved that algebraic quantification preserves stability of formulas. They also gave a nonsimple example, answering a question of Laskowski, showing that the algebraicity hypothesis cannot simply be replaced by NFCP, and asked whether a similar example exists in a simple theory. We give such an example in elementary form. The edge-set structure of the random bipartite graph has two definable equivalence relations, both stable and NFCP, whose relational composition has the order property. The resulting theory is simple and $\aleph_0$-categorical. We also prove a formal sharpness observation: every formula in every first-order theory is an existential composition of two stable NFCP formulas algebraic in the two outer variables. Consequently, closure of stable NFCP formulas under this composition characterizes stability of the whole theory.
\end{abstract}

\maketitle

\section{Introduction}

A formula \(\varphi(x,y)\) is \emph{stable} if it does not have the order property, that is, if there are no sequences \((a_i:i<\omega)\) and \((b_j:j<\omega)\) such that
\[
  \models \varphi(a_i,b_j) \quad\Longleftrightarrow\quad i<j.
\]
The stable forking conjecture asks whether, in every simple first-order theory, every forking extension is witnessed by an instance of a stable formula. We also recall that a formula \(\varphi(x,y)\) has the finite cover property, FCP, if there are arbitrarily large finite minimally inconsistent families of instances \(\varphi(x,b)\); otherwise \(\varphi\) has NFCP.

In \cite{CP}, Casanovas and Potier prove that a theory has stable forking if and only if its expansion by imaginaries has stable forking. A key step is their algebraic quantification lemma: if \(\varphi(x,y)\) is stable and \(\theta(v,x)\) implies that, for each \(v\), there are exactly \(n\) possible \(x\), then
\[
  \psi(v,y):=\exists x\bigl(\theta(v,x)\wedge \varphi(x,y)\bigr)
\]
is stable \cite[Lemma 1.2]{CP}. In their final section, they ask how far the algebraicity hypothesis can be weakened. They give a nonsimple example, answering a question of M. C. Laskowski, in which \(E(x,y)\) has NFCP and \(F(y,z)\) is stable but \(\exists y(E(x,y)\wedge F(y,z))\) is unstable; their example has the strict order property, and they ask whether a similar example exists in a simple theory \cite[Section 3]{CP}.

The purpose of this note is to answer that question. The example is the edge set of the random bipartite graph. Two edges are \(E\)-equivalent if they have the same left endpoint, and \(F\)-equivalent if they have the same right endpoint. Both \(E\) and \(F\) are equivalence relations, hence stable and NFCP. But the composition
\[
  \exists y\bigl(E(x,y)\wedge F(y,z)\bigr)
\]
says that the left endpoint of \(x\) is adjacent to the right endpoint of \(z\), and therefore recovers the random bipartite edge relation. This formula has the order property. Pairs of equivalence relations have been studied from the point of view of stability and classification theory; see, for example, Toffalori \cite{Toffalori}. The present note concerns the different, local preservation question for stable formulas raised in \cite{CP}.

The example is close to optimal for this line of argument. In Section \ref{sec:universal}, we prove a formal anti-preservation statement: every formula \(\rho(x,y)\) can be written as an existential composition of two formulas which are stable, NFCP, and one-sided algebraic in the two outer variables. Thus, in an unstable theory, no preservation theorem for existential relational composition can follow merely from stability, NFCP, equivalence-relation-like behavior, or one-sided algebraicity in the two unquantified outer variables. The algebraicity in Casanovas--Potier's lemma is algebraicity of the quantified variable over the outer variable, and this direction is essential.

\section{The random bipartite graph example}\label{sec:example}

Let \(G=(U,V,R)\) be the countable random bipartite graph. Thus \(U\) and \(V\) are disjoint sorts and \(R\subseteq U\times V\) satisfies the usual extension axioms: for any finite disjoint \(A_0,A_1\subseteq V\), there is \(u\in U\) adjacent to all elements of \(A_0\) and to no element of \(A_1\), and symmetrically with the roles of \(U\) and \(V\) interchanged. Equivalently, \(G\) is the Fraisse limit of the class of finite bipartite graphs. This is a standard free-amalgamation structure; for example, its theory is simple by Conant's general analysis of countable ultrahomogeneous finite-relational structures whose age is closed under free amalgamation \cite[Theorem 1.1]{Conant}.

Let
\[
  W:=\{(u,v)\in U\times V:R(u,v)\}
\]
be the edge sort. On \(W\), define two binary relations
\[
  E((u,v),(u',v')) \quad\Longleftrightarrow\quad u=u',
\]
and
\[
  F((u,v),(u',v')) \quad\Longleftrightarrow\quad v=v'.
\]
Equivalently, in the definable edge-sort expansion, \(E\) is equality of source and \(F\) is equality of target. Let \(M=(W,E,F)\), considered as a one-sorted structure in the language \(\{E,F\}\), and let \(T_{EF}=\Th(M)\).

\begin{remark}\label{rem:simple-reduct}
The structure \(M\) is interpretable, without parameters, in the definable edge-sort expansion of the random bipartite graph. Therefore \(T_{EF}\) is simple: any tree-property configuration in the interpreted structure would translate to one in the ambient simple theory. It is also \(\omega\)-categorical, since it is a finite-language reduct of an \(\omega\)-categorical definable expansion. In \(T_{EF}^{\mathrm{eq}}\), the quotient sorts \(W/E\) and \(W/F\), together with the relation saying that two classes meet, recover the original random bipartite graph. Thus the construction does not add model-theoretic complexity; it only presents the random bipartite graph through incidences of edges with endpoints.
\end{remark}

We first record the elementary stability fact used below.

\begin{lemma}\label{lem:eqstable}
Every equivalence-relation formula is stable and has NFCP. More precisely, if \(H(x,y)\) defines an equivalence relation, then \(H(x,y)\) is stable and has no finite cover property.
\end{lemma}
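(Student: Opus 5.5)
The plan is to prove the two properties separately, both directly from the definitions recalled in the introduction, using only that \(H\) is reflexive, symmetric and transitive.

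\textbf{Stability.} Suppose toward a contradiction that there are sequences \((a_i:i<\omega)\) and \((b_j:j<\omega)\) with \(\models H(a_i,b_j)\) if and only if \(i<j\). Look at the indices \(0<1<2\).
\begin{itemize}[label={}]
\item Since \(0<2\) and \(1<2\), we have \(H(a_0,b_2)\) and \(H(a_1,b_2)\).
\item By symmetry and transitivity, \(H(a_0,a_1)\).
\item Since \(0<1\), we also have \(H(a_0,b_1)\).
\item Combining \(H(a_1,a_0)\) with \(H(a_0,b_1)\) gives \(H(a_1,b_1)\).
\end{itemize}
This contradicts \(1\not<1\). Hence \(H\) has no order property.

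\textbf{NFCP.} I will show that every finite minimally inconsistent family of instances \(H(x,b)\) has size at most \(2\); this bounds the size uniformly, so \(H\) has NFCP. Let \(\{H(x,b_k):k<n\}\) be inconsistent. Two cases arise.
\begin{itemize}[label={}]
\item If all \(b_k\) are pairwise \(H\)-equivalent, then \(x=b_0\) satisfies every instance, by reflexivity, symmetry and transitivity. So the family is consistent, which is excluded.
\item Otherwise some pair satisfies \(\neg H(b_k,b_l)\). Then \(\{H(x,b_k),H(x,b_l)\}\) is already inconsistent, since a common solution \(c\) would give \(H(b_k,b_l)\) via \(H(c,b_k)\) and \(H(c,b_l)\).
\end{itemize}
By minimality, the family consists of just these two instances, so \(n\le 2\).

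Neither step presents a real obstacle. The only point needing care is to read NFCP in the sense recalled in the introduction: a uniform finite bound on the size of minimally inconsistent families of instances. The case split above delivers exactly that bound, namely \(2\).
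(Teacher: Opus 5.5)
Your proof is correct and follows the paper's argument: for stability you use the same columns $j=2$ and $j=1$ to get $H(a_0,a_1)$ and then contradict $\neg H(a_1,b_1)$, and for NFCP you make the same observation that a minimally inconsistent family consists of just two instances with inequivalent parameters. Your explicit case split, where pairwise equivalent parameters make the family consistent via the witness $x=b_0$, spells out a step the paper leaves implicit.
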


\begin{proof}
Suppose first that \(H(x,y)\) has the order property, witnessed by \((a_i:i<\omega)\) and \((b_j:j<\omega)\), so that \(H(a_i,b_j)\) holds if and only if \(i<j\). From the column \(j=2\), we get \(H(a_0,b_2)\) and \(H(a_1,b_2)\), hence \(H(a_0,a_1)\). From the column \(j=1\), we get \(H(a_0,b_1)\) and \(\neg H(a_1,b_1)\), contradicting transitivity and symmetry of \(H\). Hence \(H\) is stable.

For NFCP, a finite family of instances \(H(x,b_i)\) is just a finite family of equivalence classes. If such a family is inconsistent and minimal with this property, then two of the classes are distinct, and the two corresponding instances are already inconsistent. Thus there are no arbitrarily large finite minimally inconsistent families.
\end{proof}

\begin{theorem}\label{thm:main-example}
The complete simple theory \(T_{EF}\) has two formulas \(E(x,y)\) and \(F(y,z)\), both stable and NFCP, such that
\[
  \Gamma(x,z):=\exists y\bigl(E(x,y)\wedge F(y,z)\bigr)
\]
is unstable.
\end{theorem}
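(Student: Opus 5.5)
The plan has three steps: observe that simplicity of \(T_{EF}\) is already settled, check stability and NFCP of \(E\) and \(F\) via Lemma \ref{lem:eqstable}, and then show that \(\Gamma\) has the order property by unfolding it into the edge relation \(R\) of the random bipartite graph.

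Simplicity and completeness need no new argument. \(T_{EF}=\Th(M)\) is complete by definition and simple by Remark \ref{rem:simple-reduct}. Since \(E\) and \(F\) define equivalence relations on \(W\) (equality of source, equality of target), Lemma \ref{lem:eqstable} gives at once that both are stable and NFCP. This holds in \(M\), hence in the monster model, because being an equivalence relation is first-order expressible.

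The remaining work is instability of \(\Gamma\). First compute \(\Gamma\) in \(M\). For edges \(x=(u,v)\) and \(z=(u',v')\), a witness \(y\) must be an edge with source \(u\) and target \(v'\). Such a \(y\) exists exactly when \((u,v')\in W\), that is, when \(R(u,v')\). So
\[
  M\models\Gamma\bigl((u,v),(u',v')\bigr)\iff G\models R(u,v').
\]

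To get the order property it suffices to find, for each \(n\), finite sequences \((a_i)_{i<n}\) and \((b_j)_{j<n}\) in \(M\) with \(\Gamma(a_i,b_j)\iff i<j\). Compactness then yields infinite sequences, and the order property holds in an elementary extension, which is the same as \(\Gamma\) being unstable in \(T_{EF}\). Fix \(n\).

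\begin{enumerate}[label=(\roman*)]
\item Choose distinct \(u_0,\dots,u_{n-1}\in U\) and distinct \(v_0,\dots,v_{n-1}\in V\) with \(R(u_i,v_j)\iff i<j\). This uses the extension axioms: pick the \(v_j\) first, then each \(u_i\) adjacent to \(\{v_j:j>i\}\) and to no \(v_j\) with \(j\le i\). Distinctness can be arranged by excluding finitely many elements, since the extension axioms give infinitely many witnesses.
\item Each \(u_i\) must lie on some edge, so that it can be the source of an element \(a_i\in W\). The extension axioms give \(v_i'\in V\) adjacent to \(u_i\). Similarly choose \(u_j'\) adjacent to \(v_j\).
\item Set \(a_i=(u_i,v_i')\) and \(b_j=(u_j',v_j)\), both in \(W\). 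By the computation above, \(\Gamma(a_i,b_j)\iff R(u_i,v_j)\iff i<j\).
\end{enumerate}

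I expect no genuine obstacle here. The only points that need care are the bookkeeping just described: verifying that the chosen \(a_i\) and \(b_j\) really are edges, and that the quantified \(y\) ranges only over \(W\). The latter holds because \(M\) is one-sorted with universe \(W\).
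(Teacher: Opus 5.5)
Your proposal is correct and follows the paper's proof closely: the same appeal to Lemma \ref{lem:eqstable}, the same computation \(\Gamma((u,p),(q,v))\iff R(u,v)\), and the same finite half-graph plus compactness argument. The only difference is cosmetic. The paper turns all \(u_i\) and \(v_j\) into edges using one shared vertex \(p\) and one shared vertex \(q\), embedding a single finite bipartite graph at once, whereas you pick separate partners \(v_i'\) and \(u_j'\) via the extension axioms (and the distinctness of the \(u_i\) is automatic from their different adjacency patterns).
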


\begin{proof}
By construction, \(E\) and \(F\) are equivalence relations on \(W\), so they are stable and NFCP by Lemma \ref{lem:eqstable}. It remains to show that \(\Gamma(x,z)\) has the order property.

For \(x=(u,p)\in W\) and \(z=(q,v)\in W\), the formula \(\Gamma(x,z)\) holds if and only if there is an edge \(y\in W\) whose source is \(u\) and whose target is \(v\). Since elements of \(W\) are exactly edges of the bipartite graph, this is equivalent to \(R(u,v)\).

Fix \(n<\omega\). Consider the finite bipartite graph with left vertices
\[
  u_0,\ldots,u_{n-1},q
\]
and right vertices
\[
  v_0,\ldots,v_{n-1},p,
\]
all distinct, whose edges include \(R(u_i,v_j)\) exactly when \(i<j\), all edges \(R(u_i,p)\), and all edges \(R(q,v_j)\). Since \(G\) is the random bipartite graph, this finite bipartite graph embeds into \(G\). We may therefore choose such elements in \(G\). Put
\[
  a_i:=(u_i,p)\in W, \qquad b_j:=(q,v_j)\in W.
\]
Then
\[
  \Gamma(a_i,b_j)
  \quad\Longleftrightarrow\quad
  R(u_i,v_j)
  \quad\Longleftrightarrow\quad
  i<j.
\]
Thus finite half-graphs of arbitrary size occur. By compactness, \(\Gamma(x,z)\) has the order property.
\end{proof}

\begin{corollary}\label{cor:cp-answer}
There is a simple, indeed \(\omega\)-categorical simple, first-order theory with equivalence relations \(E\) and \(F\) such that \(E(x,y)\) and \(F(y,z)\) are stable and NFCP, but \(\exists y(E(x,y)\wedge F(y,z))\) is unstable.
\end{corollary}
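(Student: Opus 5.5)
The corollary is essentially a repackaging of Theorem \ref{thm:main-example} together with Remark \ref{rem:simple-reduct}. I would take \(T_{EF}=\Th(M)\), where \(M=(W,E,F)\) is the edge-set structure of the random bipartite graph, and verify the three required properties in turn.

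\emph{Completeness, simplicity and \(\omega\)-categoricity.} \(T_{EF}\) is complete because it is the theory of a single structure. Simplicity is exactly Remark \ref{rem:simple-reduct}: \(M\) is interpretable without parameters in the definable edge-sort expansion of the random bipartite graph, whose theory is simple by \cite[Theorem 1.1]{Conant}, and a tree-property witness in \(M\) would pull back to one there. \(\omega\)-categoricity also comes from the remark. The edge-sort expansion is \(\omega\)-categorical because it adds a definable sort to an \(\omega\)-categorical structure, so each \(\operatorname{Aut}\)-orbit count on finite tuples stays finite. A parameter-free reduct or interpretation inherits finitely many \(\emptyset\)-definable sets in each arity, hence oligomorphicity. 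Ryll-Nardzewski then gives \(\omega\)-categoricity of \(T_{EF}\).

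\emph{Stability and NFCP of \(E\) and \(F\).} By construction \(E\) and \(F\) are equivalence relations on \(W\), and Lemma \ref{lem:eqstable} gives that each is stable and NFCP.

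\emph{Instability of the composition.} This is precisely the second half of Theorem \ref{thm:main-example}. \(\Gamma(x,z)=\exists y(E(x,y)\wedge F(y,z))\) holds of \(x=(u,p)\) and \(z=(q,v)\) exactly when \(R(u,v)\). The extension axioms then embed finite half-graphs, using the auxiliary vertices \(p,q\) so that the tuples \((u_i,p)\) and \((q,v_j)\) are genuine edges, and compactness yields the order property.

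The only step needing any care is the transfer of \(\omega\)-categoricity and simplicity through the interpretation, and that is already handled in Remark \ref{rem:simple-reduct}. So the corollary follows immediately by combining these results.
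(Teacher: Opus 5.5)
Your proposal is correct and follows the paper's route: the corollary is read off directly from Theorem \ref{thm:main-example}, Lemma \ref{lem:eqstable}, and Remark \ref{rem:simple-reduct}, with simplicity and \(\omega\)-categoricity coming from the parameter-free interpretation of \(M\) in the random bipartite graph. Your Ryll-Nardzewski argument, via the oligomorphic action of the automorphism group of the random bipartite graph on tuples of edges, is a valid unpacking of the paper's one-line reduct argument.
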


\begin{remark}
This is formally close to the example in \cite[Section 3]{CP}. The difference is that the ambient theory here is simple. Moreover, the two composing relations are not merely NFCP and stable respectively; both are stable equivalence relations with NFCP.
\end{remark}

\section{A universal anti-preservation principle}\label{sec:universal}

The previous section gives a natural example answering the question from \cite{CP}. We now prove a formal strengthening which explains why such preservation results cannot hold in unstable theories without a genuine algebraicity assumption on the quantified variable.

\begin{definition}
Let \(k<\omega\). If \(u\) is a displayed block of variables in a formula \(\varphi(u,v)\), then \(\varphi\) is \emph{\(k\)-algebraic in \(u\)} if
\[
  T\models \forall v\,\exists^{\leq k} u\,\varphi(u,v).
\]
More generally, for a formula with several displayed blocks, we say that it is \(k\)-algebraic in one specified block if, after fixing all the remaining blocks, there are at most \(k\) realizations of that block. It is \emph{one-sided algebraic} if it is \(k\)-algebraic in one of the displayed variable blocks for some finite \(k\).
\end{definition}

\begin{lemma}\label{lem:algebraic-stable}
If \(\varphi(x,y)\) is \(k\)-algebraic in \(x\), then \(\varphi(x,y)\) is stable and has NFCP.
\end{lemma}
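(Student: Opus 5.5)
Both conclusions follow directly from the uniform bound \(T\models\forall y\,\exists^{\leq k}x\,\varphi(x,y)\): stability by a pigeonhole argument on an order-property witness, and NFCP by showing that minimally inconsistent families have size at most \(k+1\).

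\textbf{Stability.} Suppose \(\varphi\) has the order property, witnessed by \((a_i:i<\omega)\) and \((b_j:j<\omega)\) with \(\models\varphi(a_i,b_j)\) iff \(i<j\). Consider the column \(j=k+1\). Then \(\varphi(a_i,b_{k+1})\) holds for all \(i\le k\), so \(\varphi(x,b_{k+1})\) has the realizations \(a_0,\dots,a_k\). By algebraicity at most \(k\) of these are distinct, so \(a_i=a_{i'}\) for some \(i<i'\le k\). But then the column \(j=i'\) gives \(\varphi(a_i,b_{i'})\) and \(\neg\varphi(a_{i'},b_{i'})\), which is impossible since \(a_i=a_{i'}\). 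Hence \(\varphi\) is stable. (Alternatively, in the monster model one can use a sequence \(a_i\) of pairwise distinct elements of \(\omega\)-length, but the pigeonhole argument needs no indiscernibility.)

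\textbf{NFCP.} Let \(\{\varphi(x,b_1),\dots,\varphi(x,b_m)\}\) be minimally inconsistent, and take \(m\ge 2\) (the case \(m=1\) is trivial). By minimality, the subfamily omitting \(\varphi(x,b_m)\) is consistent; choose a realization \(c_m\). More generally, for each \(l\le m\) choose \(c_l\) realizing every instance except \(\varphi(x,b_l)\). Then \(c_l\) fails \(\varphi(x,b_l)\), since otherwise the whole family would be consistent. So the \(c_l\) are pairwise distinct: if \(c_l=c_{l'}\) with \(l\neq l'\), this element would satisfy all instances. Every \(c_l\) with \(l\ne 1\) realizes \(\varphi(x,b_1)\), which yields \(m-1\) distinct realizations of \(\varphi(x,b_1)\). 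Hence \(m-1\le k\), so every minimally inconsistent family has size at most \(k+1\), and \(\varphi\) has NFCP.

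The only step needing care is the distinctness of the \(c_l\), which comes directly from minimality. Everything else is a direct application of the uniform bound \(\exists^{\le k}\).
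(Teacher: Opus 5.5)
Your proposal is correct and follows the paper's proof essentially step for step. For stability you use the same pigeonhole on a column \(j>k\), then the column \(j=i'\). For NFCP you use the same near-realizations \(c_l\), their pairwise distinctness from minimality, and the count \(m-1\le k\) inside a single fiber.
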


\begin{proof}
Suppose \(\varphi(x,y)\) has the order property, witnessed by \((a_i:i<\omega)\) and \((b_j:j<\omega)\). Choose \(j>k\). Then \(\varphi(a_i,b_j)\) holds for all \(i<j\). Since the fiber \(\varphi(x,b_j)\) has size at most \(k\), two of \(a_0,\ldots,a_{j-1}\), say \(a_i\) and \(a_{i'}\) with \(i<i'<j\), are equal. But at the column \(j=i'\), one has \(\varphi(a_i,b_{i'})\) and \(\neg\varphi(a_{i'},b_{i'})\), contradiction. Thus \(\varphi\) is stable.

For NFCP, consider a finite minimally inconsistent family of instances
\[
  \{\varphi(x,b_i):i\in I\}.
\]
For each \(i\in I\), choose \(a_i\) realizing all instances except possibly \(\varphi(x,b_i)\). Since the whole family is inconsistent, \(a_i\not\models \varphi(x,b_i)\). If \(i\neq j\), then \(a_i\neq a_j\): indeed, \(a_i\models \varphi(x,b_j)\), while \(a_j\not\models \varphi(x,b_j)\). Now fix \(i_0\in I\). The instance \(\varphi(x,b_{i_0})\) contains all \(a_i\) with \(i\neq i_0\), so \(|I|-1\leq k\). Hence every finite minimally inconsistent family has size at most \(k+1\), and \(\varphi\) has NFCP.
\end{proof}

\begin{theorem}\label{thm:universal-factorization}
Let \(T\) be any complete first-order theory and let \(\rho(x,y)\) be any formula. Put \(z=(z_0,z_1)\), where \(z_0\) has the same sorts as \(x\) and \(z_1\) has the same sorts as \(y\). Define
\[
  \alpha(x;z_0,z_1):=(x=z_0)\wedge \rho(z_0,z_1)
\]
and
\[
  \beta(z_0,z_1;y):=(z_1=y).
\]
Then \(\alpha(x,z)\) and \(\beta(z,y)\) are stable and have NFCP. Moreover, \(\alpha\) is \(1\)-algebraic in the outer variable \(x\), \(\beta\) is \(1\)-algebraic in the outer variable \(y\), and
\[
  \rho(x,y)\quad\equiv\quad \exists z\bigl(\alpha(x,z)\wedge \beta(z,y)\bigr).
\]
\end{theorem}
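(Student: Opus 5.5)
The plan is to verify the three claims in turn: algebraicity, then stability and NFCP (which follow from Lemma \ref{lem:algebraic-stable}), then the equivalence of $\rho$ with the composition.

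\textbf{Algebraicity.} Fix a value of the block $z=(z_0,z_1)$. The formula $\alpha(x;z_0,z_1)$ forces $x=z_0$, so at most one $x$ realizes it. Hence $T\models\forall z\,\exists^{\leq 1}x\,\alpha(x,z)$, and $\alpha$ is $1$-algebraic in $x$. Similarly, once $z$ is fixed, $\beta(z,y)$ forces $y=z_1$, so $\beta$ is $1$-algebraic in $y$.

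\textbf{Stability and NFCP.} These come from Lemma \ref{lem:algebraic-stable}, but the partition of variables needs care. The lemma is stated for $\varphi(x,y)$ that is algebraic in the \emph{first} variable block, and stability is symmetric under swapping the roles of the two blocks. So the lemma gives stability for both formulas directly, once each is read as a formula $\varphi(\text{outer};z)$: for $\beta$ the outer variable is $y$.

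NFCP is not obviously symmetric, and I expect this to be the main (if mild) obstacle. The cleanest route is to check both partitions, so the result does not depend on which variable is regarded as the parameter.

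With the partition $\alpha(x;z)$, where $z$ is the parameter, Lemma \ref{lem:algebraic-stable} applies verbatim.

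With the partition $\alpha(z;x)$, where $x$ is the parameter, an instance $\alpha(z,a)$ defines the set $\{(a,c):\rho(a,c)\}$. Two instances with distinct parameters $a\neq a'$ are disjoint, since $z_0$ would have to equal both $a$ and $a'$. So any inconsistent family either contains two distinct parameters, and is then already inconsistent in size $2$, or consists of a single instance that is empty. Hence minimally inconsistent families have size at most $2$.

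The same argument works for $\beta$: its instances with parameter $y$ are disjoint for distinct parameters, and its instances with parameter $z$ are singletons or empty. This gives NFCP under either reading.

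\textbf{The equivalence.} This is a direct check. Suppose $\rho(a,b)$ holds. Take $z=(a,b)$; then $\alpha(a,z)$ holds because $a=z_0$ and $\rho(a,b)$, and $\beta(z,b)$ holds because $z_1=b$. Conversely, suppose there is $z=(c_0,c_1)$ with $\alpha(a,z)\wedge\beta(z,b)$. Then $a=c_0$, $c_1=b$, and $\rho(c_0,c_1)$, so $\rho(a,b)$.

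The whole statement is essentially a routine verification. The only point needing attention is confirming NFCP under both choices of parameter variable rather than relying on one orientation of Lemma \ref{lem:algebraic-stable}.
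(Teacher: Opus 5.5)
Your proposal is correct and follows essentially the paper's route: Lemma~\ref{lem:algebraic-stable} for $\alpha(x;z)$, a direct check for $\beta$, and the witness $z=(a,b)$ for the equivalence. The differences are minor---the paper proves stability of $\beta$ by a direct column argument rather than through symmetry of the order property, and it checks NFCP only with the second displayed block as parameter (its stated convention), which your check of both partitions covers.
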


\begin{proof}
The formula \(\alpha(x;z_0,z_1)\) is \(1\)-algebraic in \(x\), and hence is stable and NFCP by Lemma \ref{lem:algebraic-stable}. The formula \(\beta(z_0,z_1;y)\) is coordinate equality. To see directly that it is stable, suppose \(\beta(c_i,d_i;b_j)\) holds exactly when \(i<j\). The column \(j=2\) gives \(d_0=b_2=d_1\), while the column \(j=1\) gives \(d_0=b_1\) and \(d_1\neq b_1\), contradiction. For NFCP, a finite family of instances of \(\beta(z,y)\) consists of conditions of the form \(z_1=b\); any minimally inconsistent such family has size at most two. Also, for each \(z\) there is exactly one \(y\) such that \(\beta(z,y)\), so \(\beta\) is \(1\)-algebraic in the outer variable \(y\).

Finally,
\[
\begin{aligned}
  \exists z_0z_1\bigl(&x=z_0\wedge \rho(z_0,z_1)\wedge z_1=y\bigr)
  \quad\Longleftrightarrow\quad \rho(x,y).
\end{aligned}
\]
\end{proof}

\begin{corollary}\label{cor:characterization}
For a complete theory \(T\), the following are equivalent.
\begin{enumerate}[label=\textup{(\arabic*)}]
\item \(T\) is stable.
\item For all stable NFCP formulas \(\alpha(x,z)\) and \(\beta(z,y)\), the formula
\[
  \exists z\bigl(\alpha(x,z)\wedge \beta(z,y)\bigr)
\]
is stable.
\item The statement in \textup{(2)} holds even under the additional restrictions that \(\alpha\) is \(1\)-algebraic in \(x\) and \(\beta\) is \(1\)-algebraic in \(y\).
\end{enumerate}
\end{corollary}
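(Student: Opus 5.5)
We will prove the cycle of implications \((1)\Rightarrow(2)\Rightarrow(3)\Rightarrow(1)\). Two of the steps are essentially immediate, and all the content sits in Theorem \ref{thm:universal-factorization}.

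For \((1)\Rightarrow(2)\), we use the standard fact that in a stable theory every formula is stable. Recall that \(T\) is stable exactly when no formula \(\varphi(x,y)\) with arbitrary finite tuples of variables has the order property. If \(\alpha(x,z)\) and \(\beta(z,y)\) are any formulas, then \(\exists z(\alpha(x,z)\wedge\beta(z,y))\) is again a formula of \(T\) in the partition \((x;y)\), so it is stable. We will not need the stability or NFCP hypotheses on \(\alpha\) and \(\beta\) at all here. The only point to state explicitly is that stability of a formula is taken with respect to its displayed partition of variables, and stability of \(T\) covers every partition.

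The implication \((2)\Rightarrow(3)\) is trivial. Statement (3) asserts the same conclusion as (2), but only for a subclass of the pairs \((\alpha,\beta)\) allowed in (2).

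The substantive direction is \((3)\Rightarrow(1)\), which we prove by contraposition. Suppose \(T\) is unstable, and pick a formula \(\rho(x,y)\) with the order property. Apply Theorem \ref{thm:universal-factorization} to \(\rho\). This produces formulas \(\alpha(x;z_0,z_1)\) and \(\beta(z_0,z_1;y)\) with the following properties:
\begin{enumerate}[label=\textup{(\roman*)}]
\item both are stable and NFCP;
\item \(\alpha\) is \(1\)-algebraic in \(x\) and \(\beta\) is \(1\)-algebraic in \(y\);
\item \(\rho(x,y)\) is \(T\)-equivalent to \(\exists z(\alpha(x,z)\wedge\beta(z,y))\).
\end{enumerate}
Stability is a property of the definable set, so it is preserved under \(T\)-equivalence. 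Hence this composition has the order property, and the pair \((\alpha,\beta)\) violates (3).

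The only step needing care is the bookkeeping of variable partitions. The order property of \(\rho\) is witnessed with respect to the split \((x;y)\). The composition in (3) must be read with the same outer split, \(x\) on one side and \(y\) on the other. Theorem \ref{thm:universal-factorization} gives the equivalence in exactly this form, so the witnesses \((a_i)\), \((b_j)\) for \(\rho\) transfer directly. Beyond this check there is no real obstacle.
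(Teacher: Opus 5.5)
Your proof is correct and follows the paper's route: $(1)\Rightarrow(2)\Rightarrow(3)$ are immediate, and $(3)\Rightarrow(1)$ applies Theorem~\ref{thm:universal-factorization} to a formula $\rho(x,y)$ and transfers (in)stability across the $T$-equivalence with the same outer split $(x;y)$. The only difference is cosmetic: you argue by contraposition from an unstable $\rho$, while the paper shows directly that every $\rho$ is stable.
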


\begin{proof}
If \(T\) is stable, every formula is stable, so (1) implies (2), and (2) trivially implies (3). Conversely, assume (3). Given any formula \(\rho(x,y)\), Theorem \ref{thm:universal-factorization} writes \(\rho\) as such an existential composition. Hence \(\rho\) is stable. Since \(\rho\) was arbitrary, \(T\) is stable.
\end{proof}

\begin{remark}
Theorem \ref{thm:universal-factorization} is deliberately formal: it shows that any proposed preservation theorem for relational composition must exclude this kind of graph factorization. The random bipartite graph example of Section \ref{sec:example} is less formal and closer to the question in \cite{CP}: the two composing formulas are simply equivalence relations on the same sort.
\end{remark}

\begin{remark}[Relation to stable forking]\label{rem:stable-forking}
The results above do not give a counterexample to the stable forking conjecture. The random bipartite graph is one of the basic tame simple free-amalgamation structures. The point of the example is instead methodological. Casanovas and Potier's proof that stable forking passes to imaginaries uses algebraic quantification precisely at the step where one moves a stable formula through representatives of an imaginary \cite[Proposition 2.1]{CP}. The examples in this note show that this algebraic step cannot in general be replaced by a nonalgebraic stable or NFCP correspondence, even in a simple theory.

In the equivalence-relation example, each of \(E\) and \(F\) is individually very tame: both are stable equivalence relations and both have NFCP. Nevertheless, their relational composition remembers whether an \(E\)-class and an \(F\)-class meet. The pattern of meetings of the two quotient families is a random bipartite graph, and this is exactly where the order property appears.

Thus the algebraicity hypothesis in \cite[Lemma 1.2]{CP} is sharp in a strong sense. Stability, NFCP, one-sided algebraicity in the outer variables, and even being an equivalence relation do not suffice to preserve stability under existential relational composition.
\end{remark}

\end{document}